\def\vrho{\varrho}
\def\f{\longrightarrow}
\def\im{\Longrightarrow}
\def\cim{\Longleftarrow}
\def\N{\mathbb{N}}
\def\e{\varepsilon}
\def\<{\langle}
\def\>{\rangle}
\def\g{\gamma}
\def\e{\varepsilon}
\def\R{\mathbb{R}}
\def\sx{{s_x}}
\def\O{\mathcal{O}}
\def\inte{\textnormal{int}\,}
\def\clo{\textnormal{cl}\,}
\def\bdry{\textnormal{bdry}\,}
\def\proj{\textnormal{proj}\sp}
\def\nvarphi{\bp\nvarphi}
\def\sp{\hspace{0.015cm}}
\def\bp{\hspace{-0.08cm}}
\theoremstyle{plain}%
\newtheorem{theorem}{Theorem}
\newtheorem{proposition}{Proposition}%
\newtheorem{corollary}{Corollary}
\newtheoremstyle{iremark}
  {\topsep}   
  {\topsep}   
  {\upshape}  
  {0pt}       
  {\itshape}  
  {.}         
  {5pt plus 1pt minus 1pt} 
  {\thmname{#1}\thmnumber{ \itshape#2}\thmnote{ (#3)}} 
\theoremstyle{iremark}
\newtheorem{remark}{Remark}
\theoremstyle{definition}%
\newtheorem{example}{Example}%
\begin{document}

\title[The Complement of a Closed Set Satisfying the Extended Exterior Sphere Condition]{The Complement of a Closed Set Satisfying the Extended Exterior Sphere Condition}


\author*[1]{\fnm{Chadi} \sur{Nour}}\email{cnour@lau.edu.lb}
\equalcont{These authors contributed equally to this work.}

\author[1]{\fnm{Jean} \sur{Takche}}\email{jtakchi@lau.edu.lb}
\equalcont{These authors contributed equally to this work.}


\affil[1]{\orgdiv{Department of Computer Science and Mathematics}, \orgname{Lebanese American University}, \orgaddress{\city{Byblos Campus}, \postcode{P.O. Box 36}, \state{Byblos}, \country{Lebanon}}}

%


\abstract{We provide a novel {\it analytical} proof of an {\it improved} version of \cite[Theorem 3.1]{NT2025}, showing that the complement of a closed set satisfying the extended exterior sphere condition is nothing but the union of closed balls with lower semicontinuous radius function. The improvement lies in the radius function, which is now {\it larger} than the one used in \cite[Theorem 3.1]{NT2025}.}

\keywords{Exterior sphere condition, Union of closed balls property, Prox-regularity, Proximal analysis, Nonsmooth analysis}


\pacs[MSC Classification]{49J52, 52A20, 93B27}

\maketitle

\section{Introduction}\label{sec1} Let $S\subset\R^n$ be a nonempty and closed set. For $s\in S$, we denote by $N_S^P(s)$ the {\it proximal normal cone} to $S$ at $s$, that is, the set of all vectors $\zeta\in\R^n$ satisfying, for certain $\sigma=\sigma(s,\zeta)\geq 0$, the inequality  \begin{equation}\label{proxineq} \<\zeta,x-s\>\leq \sigma\|x-s\|^2,\;\;\forall x\in S.\end{equation}
This latter inequality \eqref{proxineq} is commonly referred to as the {\it proximal normal inequality}, and is equivalent, for $\zeta\not=0$ and $\sigma>0$, to \begin{equation} \label{balls} B\left(s+\frac{1}{2\sigma}\frac{\zeta}{\|\zeta\|};\frac{1}{2\sigma}\right)\cap S=\emptyset,\end{equation}
where $B(x;\delta)$ denotes the open ball of radius $\delta$ centered at $x$. In that case, the normal vector $\zeta$ is said to be {\it realized by a $\frac{1}{2\sigma}$-sphere}.

For $r>0$, the set $S$ is said to be {\it $r$-prox-regular} if for all $s\in\bdry S$, the boundary of $S$, and $0\not=\zeta\in N_S^P(s)$, $\zeta$ is realized by an $r$-sphere, that is,  $$B\left(s+r\frac{\zeta}{\|\zeta\|};r\right)\cap S=\emptyset\;\;\left[\hbox{or equivalently}\; \left\<\frac{\zeta}{\|\zeta\|},x-s\right\>\leq \frac{1}{2r}\|x-s\|^2,\;\;\forall x\in S\right].$$
This geometric property, which is known to enjoy in a {\it neighborhood} some properties that {\it convex sets} satisfy {\it globally}, is also  referred to as {\it positive reach}, {\it proximal smoothness}, {\it $p$-convexity} and {\it $\varphi_0$-convexity}, see \cite{canino,csw,cm,fed,prt,shapiro}. 

 In \cite[Theorem 3]{Nacry}, Nacry and Thibault proved, in the context of general Hilbert space, that if $S$ is $r$-prox-regular, then for any $\e\in]0,1[$, the complement of $S$, denoted by $S^c$, is  the union of closed balls with the {\it common} radius $\e r$. Such result has been recently extended by Nour and Takche in \cite{NT2024,NT2025} in several directions. Indeed, in \cite[Theorem 1.2]{NT2024}, Nour and Takche proved that if $S$ satisfies the {\it extended exterior $r$-sphere condition}, then $S^c$ is nothing but the union of closed balls with common radius $\frac{r}{2}$.  Note that the $r$-prox-regularity is {\it stronger} than  the extended exterior $r$-sphere condition because,  in the latter's definition, {\it only one} proximal normal vector $\zeta\in N_S^P(s)$ is needed to be realized by an $r$-sphere for the boundary points $s\in\clo (\inte S)$, the closure of the interior of $S$.  More details about the comparison between these two properties can be found in \cite{NT2009,NT2024}. In \cite[Theorem 3.1]{NT2025}, the same authors generalized \cite[Theorem 1.2]{NT2024} by allowing the radius $r$ in the definition of the extended exterior $r$-sphere condition to be {\it any} continuous function $r(\cdot)\colon\bdry S\f]0,+\infty]$. As conclusion, they obtained that $S^c$ is nothing but the union of closed balls with the radius function $\rho\colon S^c\f]0,+\infty]$ defined by \begin{equation} \label{radiusrho} \rho(x):=\min\left\{\frac{r(s)}{2} : s\in\proj_{S}(x)\right\},\end{equation} where $\proj_{S}(x)$ denotes the projection of $x$ on $S$. This means that for every $x\in S^c$, there exists $y_x\in S^c$ such that:
$$\begin{cases}x\in\bar{B}(y_x;\rho(x))\subset S^c,&\hbox{if}\; \rho(x)<+\infty,\\ x\in\bar{B}(x+\delta(y_x-x);\delta)\subset S^c\;\,\hbox{for all}\;\delta>0,&\hbox{if}\; \rho(x)=+\infty,\end{cases}$$ where $\bar{B}(z;\delta)$ is the closed ball centered at $z$ with radius $\delta$. Note that the {\it geometric} proof of this generalization given in \cite{NT2025} relied on the {\it balls} characterizations of  the extended exterior sphere condition and the union of closed balls property, making the proof complicated and lengthy, see \cite[Section 3]{NT2025}.

The goal of this paper is twofold: First, we improve the statement of \cite[Theorem 3.1]{NT2025} by introducing a radius function {\it larger} than the function $\rho(\cdot)$ defined in \eqref{radiusrho} (see Theorem \ref{mainth}). Second, in order to prove the latter, we employ {\it analytical} characterizations of the extended exterior sphere condition and the union of closed balls property, which simplify and shorten the proof.

In the next section, we present our basic notations and definitions, provide analytical characterizations of the extended exterior sphere condition and the union of closed balls property, and we state the main result of this paper, namely, Theorem \ref{mainth}. In this latter, we prove that the complement of a set satisfying the extended exterior $r(\cdot)$-sphere condition is nothing but the union of closed balls with lower semicontinuous radius function larger than the one used in \cite[Theorem 3.1]{NT2025}. Section \ref{sec3} is devoted to the analytical proof of Theorem \ref{mainth}.

\section{Preliminaries $-$ Main Result} \label{sec2} 

\subsection{Basic Notations and Definitions}

For the Euclidean norm and the usual inner product, we use $\|\cdot\|$ and $\<,\>$, respectively. For $r>0$ and $x\in\R^n$, we set $B(x;\rho):= x + \rho B$ and $\bar{B}(x;\rho):= x + \rho \bar{B}$, where $B$ and $\bar{B}$ are the open and the closed unit balls, respectively.  For a set $S\subset\R^n$, we denote by $S^c$, $\inte S$, $\bdry S$ and $\clo S$, the complement (with respect to $\R^n$), the interior, the boundary and the closure of $S$, respectively. The closed segment (resp. open segment)  joining two points $x$ and $y$ in $\R^n$ is denoted by $[x,y]$ (resp. $]x,y[$). The distance from a point $x$ to a set $S$ is denoted by $d_S(x)$. We also denote by $\proj_S(x)$ the set of closest points in $S$ to $x$, that is, the set of points $s$ in $S$ satisfying $d_S(x)=\|s-x\|$. Finally and as mentioned in the introduction, we denote, for nonempty and closed set $S\subset\R^n$ and $s\in S$, by $N_S^P(s)$ the proximal normal cone to $S$ at $s$.  Note that for $x\in S^c$, $s_x\in\proj_S(x)$ and $\zeta_{s_x}:=\frac{x-s_x}{\|x-s_x\|}$, we have that $\zeta_{s_x}\in N_S^P(s_x)$, and $\zeta_{s_x}$ is realized by a $\|x-s_x\|$-sphere. More information about proximal and nonsmooth analysis can be found in the monographs \cite{clsw,mord,penot,rockwet,thibault}.

\subsection{The Extended Exterior Sphere Condition}

Let $S\subset\R^n$ be nonempty and closed. For $r(\cdot)\colon\bdry S\f]0,+\infty]$ continuous, we say that $S$ satisfies the  {\it extended exterior $r(\cdot)$-sphere condition} if for every $s\in\bdry S$, the following assertions hold:
\begin{itemize}
\item If $s\in\bdry(\inte S)$ then there {\it exists} a unit vector $\zeta_s\in N^P_S(s)$ such that $$\begin{cases}\zeta_s\;\hbox{is realized by an}\;r(s)\hbox{-sphere},&\hbox{if}\; r(s)<+\infty,\\ \zeta_s\;\hbox{is realized by a}\;\rho\hbox{-sphere}\;\hbox{for all}\;\rho>0,&\hbox{if}\; r(s)=+\infty.\end{cases}$$
\item  If $s\not \in\bdry(\inte S)$ then for {\it all} unit vectors $\zeta_s\in N^P_S(s)$, we have $$\begin{cases}\zeta_s\;\hbox{is realized by an}\;r(s)\hbox{-sphere},&\hbox{if}\; r(s)<+\infty,\\ \zeta_s\;\hbox{is realized by a}\;\rho\hbox{-sphere}\;\hbox{for all}\;\rho>0,&\hbox{if}\; r(s)=+\infty.\end{cases}$$
\end{itemize}
From the equivalences  (for $\zeta\not=0$ and $\sigma>0$) \begin{equation} \label{equivalence} \eqref{proxineq} \Longleftrightarrow \eqref{balls} \Longleftrightarrow \underbrace{\left\< x-s,x-s-\frac{1}{\sigma}\frac{\zeta}{\|\zeta\|}\right\> \geq 0,\;\;\forall x\in S,}_{(*)}\end{equation}
we deduce the following analytical characterization of the extended exterior sphere condition. The set $S$ satisfies the extended exterior $r(\cdot)$-sphere condition if for every $s\in\bdry S$, the following assertions hold:
\begin{itemize}
\item If $s\in\bdry(\inte S)$ then there exists a unit vector $\zeta_s\in N^P_S(s)$ such that $$\begin{cases}\left\< \zeta_s,x-s\right\> \leq \frac{1}{2r(s)} \|x-s\|^2\;\,\hbox{for all}\;x\in S,&\hbox{if}\; r(s)<+\infty,\\  \left\< \zeta_s,x-s\right\>\leq 0\;\,\hbox{for all}\;x\in S,&\hbox{if}\; r(s)=+\infty.\end{cases}$$ $$\left[\hbox{or equivalently} \begin{cases}\left\<x-s,x-s-2r(s)\zeta_s\right\> \geq 0\;\,\hbox{for all}\;x\in S,&\hbox{if}\; r(s)<+\infty,\\  \left\< \zeta_s,x-s\right\>\leq 0\;\,\hbox{for all}\;x\in S,&\hbox{if}\; r(s)=+\infty.\end{cases}\right]$$
\item  If $s\not \in\bdry(\inte S)$ then for all unit vectors $\zeta_s\in N^P_S(s)$, we have $$\begin{cases}\left\< \zeta_s,x-s\right\> \leq \frac{1}{2r(s)} \|x-s\|^2\;\,\hbox{for all}\;x\in S,&\hbox{if}\; r(s)<+\infty,\\  \left\< \zeta_s,x-s\right\> \leq 0\;\,\hbox{for all}\;x\in S,&\hbox{if}\; r(s)=+\infty. \end{cases}$$ $$\left[\hbox{or equivalently} \begin{cases}\left\<x-s,x-s-2r(s)\zeta_s\right\> \geq 0\;\,\hbox{for all}\;x\in S,&\hbox{if}\; r(s)<+\infty,\\  \left\< \zeta_s,x-s\right\>\leq 0\;\,\hbox{for all}\;x\in S,&\hbox{if}\; r(s)=+\infty.\end{cases}\right]$$
\end{itemize}
As mentioned in the introduction, the extended exterior $r(\cdot)$-sphere condition is {\it weaker} than the $r(\cdot)$-prox-regularity. More information about this property can be found in \cite{NT2024,NT2025}.\\

\begin{remark} \label{remeq} The equivalences of \eqref{equivalence} remain valid if the inequalities in \eqref{proxineq} and $(*)$ are strict, and the open ball in \eqref{balls} is replaced by a closed one.\\
\end{remark}

\begin{remark} \label{remeqbis} Using the equivalences of \eqref{equivalence} and Remark \ref{remeq}, one can easily prove that for  $S\subset\R^n$ nonempty and closed, and for $x\in\R^n$ and $\delta>0$, if $y$ and $z$ are two diametrically opposite points of the closed ball $\bar{B}(x;\delta)$, then \begin{equation*} \label{equivrem2} \bar{B}(x;\delta)\subset S^c\;\;(\hbox{resp.}\;{B}(x;\delta)\subset S^c)  \Longleftrightarrow \<s-y,s-z\>>0\;\;(\hbox{resp.}\;\geq 0),\;\;\forall s\in S.\end{equation*}
This can also be easily deduced from the following fact $$\forall s\in\R^n,\;\hbox{we have}\;\begin{cases}\<s-y,s-z\><0, &\hbox{if}\;s\in B(x;\delta),\\\<s-y,s-z\>=0, &\hbox{if}\;s\in \bdry (B(x;\delta)),\\ \<s-y,s-z\>>0, &\hbox{if}\;s\not \in \bar{B}(x;\delta). \end{cases}$$
From this latter, we can also deduce that for all $(y,z,y',z')\in\R^{4n}$, if $[y',z']\subset ]y,z[ $ then \begin{equation} \label{equivrembis} \forall s\in\R^n,\;\big[ \<s-y,s-z\>\geq0\im \<s-y',s-z'\>>0\big]. \end{equation}

\end{remark}

\subsection{The Union of Closed Balls Property}  Let $\O\subset\R^n$ be nonempty and open. For $\varrho\colon\O\f]0,+\infty]$ a lower semicontinuous function, we say that $\O$ is the union of closed balls with radius function $\varrho(\cdot)$ if for every $x\in\O$, there exists $y_x\in\O$ such that 
\begin{equation}\label{ucbp} \begin{cases}x\in\bar{B}(y_x;\varrho(x))\subset \O,&\hbox{if}\; \varrho(x)<+\infty,\\ x\in\bar{B}(x+\delta(y_x-x);\delta)\subset \O\;\,\hbox{for all}\;\delta>0,&\hbox{if}\; \varrho(x)=+\infty.\end{cases}\end{equation}
The following proposition provides an analytical characterization of the union of closed balls property introduced above. This characterization will be essential for proving our main result.\\

\begin{proposition} \label{prop1} Let $\O\subset\R^n$ be nonempty and open.  For $\varrho\colon\O\f(0,+\infty]$ a lower semicontinuous function, the set $\O$ is the union of closed balls with radius function $\varrho(\cdot)$  if and only if for every $x\in\O$ there exists a unit vector $\zeta_x\in\R^n$ satisfying$\sp:$
\begin{enumerate}[$(i)$]
\item If $\varrho(x)<+\infty$, then there exists  $t_x\in [0,\varrho(x)]$ such that  \begin{equation} \label{imtri} \<y-x+t_x\zeta_x,y-x+(t_x-2\vrho(x))\zeta_x\> >0,\;\;\forall y\in\O^c.\end{equation}
\item If $\varrho(x)=+\infty$, then \begin{equation} \label{imtribis} \<\zeta_x,y-x\>\leq0,\;\;\forall y\in\O^c. \end{equation}
\end{enumerate}
\end{proposition}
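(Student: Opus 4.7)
The plan is to translate the geometric inclusion $\bar{B}(y_x;\varrho(x))\subset\Omega$ into the inner-product inequality of the proposition by identifying the appropriate diametrically opposite points, then invoke Remark \ref{remeqbis} (applied to $S=\Omega^c$) in the finite case and a direct half-space argument in the infinite case. The natural correspondence between the point $y_x\in\Omega$ from \eqref{ucbp} and the pair $(\zeta_x,t_x)$ from the proposition is $y_x = x+(\varrho(x)-t_x)\zeta_x$.

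For the case $\varrho(x)<+\infty$, in the ``only if'' direction I would, given $y_x\in\Omega$ with $x\in\bar{B}(y_x;\varrho(x))\subset\Omega$, set $t_x:=\varrho(x)-\|y_x-x\|\in[0,\varrho(x)]$ and choose $\zeta_x:=(y_x-x)/\|y_x-x\|$ when $y_x\neq x$ (any unit vector otherwise). The two diametrically opposite points of $\bar{B}(y_x;\varrho(x))$ along the direction $\zeta_x$ are then $x-t_x\zeta_x$ and $x+(2\varrho(x)-t_x)\zeta_x$, and applying Remark \ref{remeqbis} with $S=\Omega^c$ yields precisely \eqref{imtri}. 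The ``if'' direction is the same computation run backwards: defining $y_x:=x+(\varrho(x)-t_x)\zeta_x$, I would read off the same diametrically opposite points, apply Remark \ref{remeqbis} in the reverse direction to obtain $\bar{B}(y_x;\varrho(x))\subset\Omega$, and observe that $\|y_x-x\|=\varrho(x)-t_x\leq\varrho(x)$ places $x$ inside this ball (so in particular $y_x\in\Omega$).

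For the case $\varrho(x)=+\infty$, the key observation is that the family of balls $\bar{B}(x+\delta(y_x-x);\delta)$, $\delta>0$, fills in the limit $\delta\to+\infty$ the open half-space $\{z:\langle y_x-x,z-x\rangle>0\}$; this forces $\|y_x-x\|=1$ except in the trivial situation $\Omega=\R^n$, where \eqref{imtribis} holds vacuously. Setting $\zeta_x:=y_x-x$, the inclusion of every such ball in $\Omega$ immediately gives $\langle\zeta_x,y-x\rangle\leq0$ for every $y\in\Omega^c$, which is \eqref{imtribis}. For the converse, given a unit vector $\zeta_x$ satisfying \eqref{imtribis}, I would take $y_x:=x+\zeta_x$ and use the elementary identity $\|p-x-\delta\zeta_x\|^2\leq\delta^2 \Longleftrightarrow \|p-x\|^2\leq 2\delta\langle\zeta_x,p-x\rangle$, which combined with \eqref{imtribis} forces any $p\in\bar{B}(x+\delta\zeta_x;\delta)\cap\Omega^c$ to satisfy $\|p-x\|=0$, contradicting $x\in\Omega$. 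Hence $\bar{B}(x+\delta\zeta_x;\delta)\subset\Omega$ for every $\delta>0$, as required.

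There is no serious obstacle here: the proof is essentially a reparameterization of \eqref{ucbp} followed by one application of Remark \ref{remeqbis}. The only care needed is with edge cases, namely $y_x=x$ (equivalently $t_x=\varrho(x)$) in the finite case, where $\zeta_x$ may be chosen to be any unit vector, and the trivial situation $\Omega=\R^n$ in the infinite case, where $\Omega^c=\emptyset$ and both sides of the equivalence are vacuous. Note also that the lower semicontinuity of $\varrho$ plays no role in the proof itself; it is only a standing hypothesis on the radius function in the definition of the union of closed balls property.
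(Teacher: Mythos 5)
Your proposal is correct and follows essentially the same route as the paper's proof: the same correspondence $y_x=x+(\varrho(x)-t_x)\zeta_x$ with $t_x=\varrho(x)-\|y_x-x\|$, the same reduction to the diametrically-opposite-points criterion of Remark \ref{remeqbis} in the finite case, and the same $\delta\to+\infty$ half-space argument in the infinite case. The only cosmetic difference is in the ``only if'' direction when $\varrho(x)=+\infty$: you observe that $\|y_x-x\|=1$ unless $\O=\R^n$, whereas the paper allows $0<\|y_x-x\|\le 1$ and rescales the balls to $\bar{B}(x+\delta\|y_x-x\|\zeta_x;\delta\|y_x-x\|)$ before passing to the limit; the two devices are equivalent.
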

\begin{proof} $\cim:$ Let $x\in\O$, and let $\zeta_x\in\R^n$ unit such that Proposition \ref{prop1}$(i)$-$(ii)$ are satisfied. There are two cases to consider.\vspace{0.1cm}\\
\underline{Case 1:} $\varrho(x)<+\infty$.\vspace{0.1cm}\\
Let $y_x:=x+(\vrho(x)-t_x)\zeta_x$. Then, using \eqref{imtri}, we have $$\<y-y_x+\vrho(x)\zeta_x,y-y_x-\vrho(x)\zeta_x\>>0,\;\;\forall y\in\O^c.$$
This yields, using Remark \ref{remeq}, that $\bar{B}(y_x;\varrho(x))\subset (\O^c)^c=\O$.\vspace{0.1cm}\\
\underline{Case 2:} $\varrho(x)=+\infty$.\vspace{0.1cm}\\
Then  $\<\zeta_x,y-x\>\leq 0< \frac{1}{2\delta}\|y-x\|^2$ for all $\delta>0$ and $y\in\O^c$. Hence, using  Remark \ref{remeq}, we deduce that for $y_x:=x+\zeta_x$, we have $$B(x+\delta(y_x-x);\delta)=B(x+\delta\zeta_x;\delta)\subset (\O^c)^c=\O,\;\;\forall \delta>0.$$
$\im:$  Let $x\in\O$, and let  $y_x\in\O$ such that \eqref{ucbp} is satisfied. There are two cases to consider.\vspace{0.1cm}\\
\underline{Case 1:} $\varrho(x)<+\infty$.\vspace{0.1cm}\\
Then $x\in\bar{B}(y_x;\varrho(x))\subset \O$. This yields that $t_x:=(\vrho(x)-\|y_x-x\|)\in[0,\vrho(x)]$. Moreover, for $$\zeta_x:=\begin{cases}\displaystyle \frac{y_x-x}{\|y_x-x\|}, &\hbox{if}\;x\not=y_x,\vspace{0.15cm}\\ (1,0,\dots,0)\in\R^n, &\hbox{if}\;x=y_x,\end{cases} $$
we have \begin{eqnarray*}B((x-t_x\zeta_x)+\vrho(x)\zeta_x;\vrho(x))\cap \O^c=B(y_x;\vrho(x))\cap\O^c\subset \bar{B}(y_x;\vrho(x))\cap\O^c=\emptyset. \end{eqnarray*}
This gives, using Remark \ref{remeq}, that $$ \<y-x+t_x\zeta_x,y-x+(t_x-2\vrho(x))\zeta_x\> >0,\;\;\forall y\in\O^c. $$
\underline{Case 2:} $\varrho(x)=+\infty$.\vspace{0.1cm}\\
Then  $x\in\bar{B}(x+\delta(y_x-x);\delta)\subset \O$ for all $\delta>0$. Clearly, we have that $\|y_x-x\|\leq1$. 
\underline{Case 2.1:} $y_x=x$.\vspace{0.1cm}\\
Then $\bar{B}(x;\delta)\subset \O$  for all  $\delta>0$, which yields that $\O=\R^n$. Now taking $\zeta_x$ any unit vector, we clearly have, as $\O^c=\emptyset$, that $$\<\zeta_x,y-x\>\leq0,\;\;\forall y\in\O^c.$$
\underline{Case 2.2:} $y_x\not=x$.\vspace{0.1cm}\\
Let $\zeta_x:= \frac{y_x-x}{\|y_x-x\|}$. For all $\delta>0$, we have \begin{eqnarray*}\bar{B}(x+\delta\|y_x-x\|\zeta_x; \|y_x-x\|\delta)\cap 
\O^c&=& \bar{B}(x+\delta(y_x-x); \|y_x-x\|\delta)\cap\O^c \\&\subset & \bar{B}(x+\delta(y_x-x); \delta)\cap\O^c=\emptyset. \end{eqnarray*}
Hence, by Remark \ref{remeq}, we deduce that for all $\delta>0$, we have  $$\<\zeta_x,x-y\><\frac{1}{2\delta\|y_x-x\|\\}\|y-x\|^2,\;\;\forall y\in\O^c.$$
Taking $\delta\f+\infty$, we conclude that $\<\zeta_x,y-x\>\leq0$ for all $y\in\O^c$.\end{proof}

\begin{example} In this example, we prove that the inequality \eqref{imtribis} cannot be strict. In $\R^2$, let $\O:=S^c$ where $S$ is the closed set 
$$S:=\{(x,y)\in\R^2 : x\in]-\infty,-1]\cup [1,+\infty[\;\hbox{and}\;y=0\}.$$ Clearly, $\O$ is the union of closed balls with radius function $\vrho(x,y):=+\infty$ for all $(x,y)\in\O$. Consider $(0,0)\in\O$ and $\zeta=(\zeta_1,\zeta_2)$ unit such that $$\<(\zeta_1,\zeta_2),(x,y)-(0,0)\>\leq 0,\;\;\forall (x,y)\in \O^c.$$
Then $\zeta_1x+\zeta_2y\leq0$ for all $(x,y)\in \O^c$. Taking $(x,y)=(\pm1,0)\in\O^c$, we obtain that $\zeta_1= 0$. Hence, $\zeta=(0,\pm1)$. Now, since $(1,0)\in\O^c$, we have that $$\<\zeta,(1,0)-(0,0)\>= \<(0,\pm1),(1,0)\>=0.$$
\end{example} 

\subsection{Statement of the Main Result} We begin by introducing some notations needed for the statement of our main result. Let $S\subset\R^n$ be nonempty and closed, and let $r(\cdot)\colon\bdry S\f]0,+\infty]$ be continuous. We denote by $\rho\colon S^c\f]0,+\infty]$ the function defined in \eqref{radiusrho}, that is, $$\rho(x):=\min\left\{\frac{r(s)}{2} : s\in\proj_{S}(x)\right\}.$$
Note that the function $\rho$ is lower semicontinuous on $S^c$, and the infimum defining $\rho(x)$, for each $x\in S^c$, is attained, see \cite[Proof of Theorem 3.1]{NT2025}. So, for each $x\in S^c$, we denote by $s_x\in\proj_S(x)$ a point such that $ \rho(x)=\frac{r(s_x)}{2}$.
For $\g>0$, and $\rho_x:=d_S(x)$ for all $x\in S^c$, we define the function $\vrho_\g\colon S^c\f]0,+\infty]$ by \begin{equation}\label{radiusvrhoe} \vrho_\g(x):=\max\left\{\g\rho_x,\frac{1}{2}\sqrt{\g^2\rho_x^2+4\rho(x)^2}\right\}. \end{equation}

Now we are ready to state the main result of this paper, namely Theorem \ref{mainth}. Note that this theorem, whose proof is deferred  to the next section, generalizes  \cite[Theorem 3.1]{NT2025} as we will illustrate in Corollary \ref{coro}.\\

\begin{theorem} \label{mainth} Let $S\subset\R^n$ be a nonempty and closed set satisfying the extended exterior $r(\cdot)$-sphere condition for some continuous function $r(\cdot)\colon\bdry S\f]0,+\infty]$. Then for all $\frac{1}{2\sqrt{3}-2}\leq \g<1$, $S^c$ is the union of closed balls with lower semicontinuous radius function $\vrho_\g(\cdot)$.\\
\end{theorem}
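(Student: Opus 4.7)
The plan is to verify the analytical characterization provided by Proposition \ref{prop1}. Fix $x\in S^c$ and let $s_x\in\proj_S(x)$ be a minimizer realizing $\rho(x)=r(s_x)/2$; write $r:=r(s_x)$, $\rho_x:=\|x-s_x\|$, $R:=\vrho_\gamma(x)$, and $\zeta_x:=(x-s_x)/\rho_x$. Observe that $\zeta_x$ is a unit proximal normal at $s_x$, realized \emph{a priori} only by a $\rho_x$-sphere. The central preliminary step is to promote this to the full exterior sphere inequality
\[
\langle\zeta_x,\,s-s_x\rangle\;\leq\;\frac{\|s-s_x\|^2}{2r},\qquad \forall\, s\in S,
\]
which is equivalent to $\zeta_x$ being realized by an $r(s_x)$-sphere. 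When $s_x\notin\bdry(\inte S)$, this is immediate from the extended exterior $r(\cdot)$-sphere condition, which then forces \emph{every} unit normal at $s_x$ to be realized. When $s_x\in\bdry(\inte S)$, the condition only guarantees some unit normal $\zeta_{s_x}$ to be realized, and one must use that $s_x\in\proj_S(x)$ to reconcile $\zeta_x$ with $\zeta_{s_x}$; this is the most delicate point.

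Granting that inequality, I would then split into the two cases defining $\vrho_\gamma$. In the case $R=\gamma\rho_x$, setting $t_x:=R$ makes the ball center coincide with $x$ itself; Proposition \ref{prop1}(i) then reduces to $\|s-x\|^2>R^2$, which is strict since $\|s-x\|\geq\rho_x>\gamma\rho_x=R$. In the case $R=\tfrac{1}{2}\sqrt{\gamma^2\rho_x^2+4\rho(x)^2}$ (equivalently $\rho(x)>\tfrac{\sqrt 3}{2}\gamma\rho_x$), the numerical hypothesis $\gamma\geq\tfrac{1}{2\sqrt 3-2}$ is precisely what makes the construction close: it ensures both $\rho_x<r$ and $R<r$. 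I would then set $b:=\min\{r,\,\rho_x+R\}$, $t_x:=\rho_x+R-b\in[0,R]$, and $c:=s_x+b\zeta_x=x+(R-t_x)\zeta_x$. Since $b\geq\rho_x$, one has $\|x-c\|=b-\rho_x\leq R$, so $x\in\bar B(c,R)$; and, using the exterior sphere inequality together with $b\leq r$,
\[
\|s-c\|^2=\|s-s_x\|^2-2b\langle\zeta_x,s-s_x\rangle+b^2\;\geq\;\|s-s_x\|^2\bigl(1-\tfrac{b}{r}\bigr)+b^2\;\geq\;b^2\;>\;R^2,\qquad\forall\,s\in S,
\]
which is exactly the strict Proposition \ref{prop1}(i) inequality. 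The boundary case $R=+\infty$ (which forces $r=+\infty$) is treated analogously via Proposition \ref{prop1}(ii), using the half-space form $\langle\zeta_x,s-s_x\rangle\leq 0$ of the exterior sphere condition. Lower semicontinuity of $\vrho_\gamma$ then follows by taking the maximum of two lower semicontinuous functions (the continuous $\rho_x=d_S(x)$ and the lower semicontinuous $\rho(\cdot)$ noted in Section \ref{sec2}).

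The hardest step is the opening one: justifying that the specific projection direction $\zeta_x=(x-s_x)/\rho_x$ is realized by an $r(s_x)$-sphere in the boundary case $s_x\in\bdry(\inte S)$, where the hypothesis only guarantees existence of \emph{some} realized unit normal rather than the one we need. Once that is in place, the choices of $b$, $t_x$, and $c$ above close the analytical inequality by a short direct calculation, and no further geometric manipulation is required.
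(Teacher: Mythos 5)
Your proposal is correct in the easy branches but has a fatal gap at the step you yourself identify as central. The lower semicontinuity argument, the case $\vrho_\g(x)=\g\rho_x$ (ball centered at $x$ itself), and the case $s_x\notin\bdry(\inte S)$ are all fine --- indeed your algebraic verification with $b:=\min\{r,\rho_x+R\}$ and $c:=s_x+b\zeta_x$ is a slightly cleaner route than the paper's segment-inclusion argument, and it closes because $R<\frac{1}{\sqrt 3}r<r$ in that regime. However, your ``central preliminary step'' --- promoting the projection direction $\zeta_x=(x-s_x)/\rho_x$ to a normal realized by an $r(s_x)$-sphere when $s_x\in\bdry(\inte S)$ --- is not merely left unproven; it is \emph{false} in general, and no reconciliation of $\zeta_x$ with the one realized normal guaranteed by the hypothesis is possible. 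Take $S=\R^2\setminus(B_1\cup B_2)$ with $B_1,B_2$ two barely overlapping open unit disks and let $s$ be a crossing point of the two circles. Then $S$ satisfies the extended exterior $1$-sphere condition, $s\in\bdry(\inte S)$, and $N^P_S(s)$ is a two-dimensional cone of which only the two extreme rays (pointing to the centers of $B_1$ and $B_2$) are realized by unit spheres; the bisector direction is realized only by spheres of radius comparable to the half-width of the lens $B_1\cap B_2$, which can be made arbitrarily small. A point $x=s+t\zeta$ with $\zeta$ the bisector and $t$ small has $\proj_S(x)=\{s\}$ and $\zeta_x=\zeta$, so the inequality $\<\zeta_x,s'-s\>\leq\|s'-s\|^2/2r(s)$ for all $s'\in S$ fails, and with it your entire axial construction along $\zeta_x$. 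The paper's own Example \ref{remexam} makes the same point: the largest ball in $S^c$ containing the origin is one of the $B(c_i;1)$, which is not centered on any projection ray from the origin.

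The paper's proof is organized precisely around this obstruction. When $s_x\in\bdry(\inte S)$ it does \emph{not} work with $\zeta_x$: it chooses interior points $z_n\f s_x$, takes boundary points $s_n\in[x,z_n]\cap\bdry S$, extracts a limit $\xi_{s_x}\in N^P_S(s_x)$ of realized normals at $s_n$, and obtains only the weak compatibility $\<\xi_{s_x},\zeta_{s_x}\>\geq 0$. The ball containing $x$ is then centered along the direction from $x$ toward $y_x=s_x+r(s_x)\xi_{s_x}$, which is in general different from $\zeta_x$, and the verification that this off-axis ball of radius $\frac12\sqrt{\g^2\rho_x^2+4\rho(x)^2}$ still contains $x$ and avoids $S$ is where the precise form of $\vrho_\g$ and the bounds $\g\geq\frac{1}{2\sqrt3-2}$ and $\g\geq\frac{1}{\sqrt3}$ are actually consumed (your reading that the constant only ensures $\rho_x<r$ and $R<r$ understates its role; those follow already from $\g>\frac{1}{\sqrt3}$). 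So the proposal is missing the essential idea of the proof, and the step it defers cannot be repaired in the form stated.
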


From the definition of the function $\vrho_\g(\cdot)$ given in \eqref{radiusvrhoe}, we deduce that $\vrho_\g(x)\geq \rho(x)$ for all $x\in S^c$. Then Theorem  \ref{mainth}  gives rise to the following corollary, which coincides with \cite[Theorem 3.1]{NT2025}.\\

\begin{corollary}[{\cite[Theorem 3.1]{NT2025}}] \label{coro}  Let $S\subset\R^n$ be a nonempty and closed set satisfying the extended exterior $r(\cdot)$-sphere condition for some continuous function $r(\cdot)\colon\bdry S\f]0,+\infty]$. Then $S^c$ is the union of closed balls with lower semicontinuous radius function $\rho(\cdot)$.\\
\end{corollary}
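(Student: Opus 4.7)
The plan is to verify, for each $x\in S^c$, the analytical characterization of Proposition \ref{prop1} applied to $\Omega = S^c$ with radius function $\vrho_\g$. Fix $x\in S^c$ and choose $s_x\in\proj_S(x)$ attaining the minimum $\rho(x)=r(s_x)/2$; set $\zeta_x=(x-s_x)/d_S(x)$, a unit proximal normal at $s_x$. In the analytical form from Section \ref{sec2}, the extended exterior $r(\cdot)$-sphere condition supplies, for every $y\in S$, the inequality $\<y-s_x,\, y-s_x-2r(s_x)\zeta_x\> \geq 0$ (if $r(s_x)<+\infty$) or $\<\zeta_x,\, y-s_x\>\leq 0$ (if $r(s_x)=+\infty$).

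If $r(s_x)=+\infty$, then $\vrho_\g(x)=+\infty$ and Proposition \ref{prop1}(ii) applies: since $x-s_x=d_S(x)\zeta_x$, one has $\<\zeta_x,\, y-x\>=\<\zeta_x,\, y-s_x\>-d_S(x)\leq -d_S(x)<0$ for every $y\in S$, as required. Otherwise $\vrho_\g(x)<+\infty$ and we verify Proposition \ref{prop1}(i). By Remark \ref{remeqbis}, this amounts to producing $t_x\in[0,\vrho_\g]$ such that the closed ball $\bar B(y_x;\vrho_\g)$ with center $y_x:=x+(\vrho_\g-t_x)\zeta_x$ satisfies $\bar B(y_x;\vrho_\g)\cap S=\emptyset$. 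I would search for such a center along the outward normal ray $s_x+[0,+\infty)\zeta_x$ and exploit the two open balls known to lie in $S^c$: the projection ball $B(x;d_S(x))$ and the exterior ball $B(s_x+2\rho(x)\zeta_x;2\rho(x))$.

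The argument splits naturally into two geometric regimes. When $d_S(x)\leq 2\rho(x)$, the projection ball sits inside the exterior ball and the candidate is inscribed in the latter: setting $L:=\|y_x-s_x\|$, the inclusion $\bar B(y_x;\vrho_\g)\subset B(s_x+2\rho(x)\zeta_x;2\rho(x))$ becomes $|L-2\rho(x)|+\vrho_\g<2\rho(x)$, while $x\in\bar B(y_x;\vrho_\g)$ becomes $|L-d_S(x)|\leq\vrho_\g$; the specific value \eqref{radiusvrhoe} of $\vrho_\g$ is precisely what makes these two intervals in $L$ intersect. When $d_S(x)>2\rho(x)$, the exterior ball is swallowed by the projection ball; the threshold $\g\geq 1/(2\sqrt{3}-2)>1/\sqrt{3}$ forces the maximum defining $\vrho_\g$ to be attained by $\g\, d_S(x)$, and then the choice $t_x=\vrho_\g$ (hence $y_x=x$) works immediately, since $\|y-x\|\geq d_S(x)>\g\, d_S(x)=\vrho_\g$ for every $y\in S$.

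The main obstacle I anticipate is in the first regime: checking non-emptiness of the feasibility interval for $L$ requires a delicate computation balancing tangency to the exterior sphere with containment of $x$, and the numerical threshold $\g\geq 1/(2\sqrt{3}-2)$ should arise exactly from matching the two constraints at the boundary $d_S(x)=2\rho(x)$ between the regimes, ensuring consistency of the two pieces of the $\max$ defining $\vrho_\g$. Finally, the lower semi-continuity of $\vrho_\g$ follows from that of $\rho$ (recorded in Section \ref{sec2}) combined with continuity of $x\mapsto d_S(x)$.
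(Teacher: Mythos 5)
There is a genuine gap, and it sits at the heart of the theorem. You write that ``the extended exterior $r(\cdot)$-sphere condition supplies, for every $y\in S$, the inequality $\<y-s_x,\,y-s_x-2r(s_x)\zeta_x\>\geq 0$'' with $\zeta_x=(x-s_x)/d_S(x)$, and your regime $d_S(x)\leq 2\rho(x)$ then inscribes the candidate ball in the exterior ball $B(s_x+2\rho(x)\zeta_x;2\rho(x))$. But the extended condition guarantees this only when $s_x\notin\bdry(\inte S)$: at points of $\bdry(\inte S)$ it provides a \emph{single} unit normal in $N_S^P(s_x)$ realized by an $r(s_x)$-sphere, and that normal need not be the direction $\zeta_x$ pointing toward $x$ (it can even be orthogonal to it). Consequently the exterior ball you rely on need not be disjoint from $S$, and the same defect invalidates your $r(s_x)=+\infty$ case, where you again pair the half-space inequality with the direction $\zeta_x$. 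This is precisely what separates the extended exterior sphere condition from prox-regularity, and it is where the bulk of the paper's proof goes: in its Case 2.2 one approximates $s_x$ by boundary points $s_n$ lying on segments $[x,z_n]$ with $z_n\in\inte S$, extracts a limit normal $\xi_{s_x}$ realized by an $r(s_x)$-sphere and satisfying only $\<\xi_{s_x},\zeta_{s_x}\>\geq 0$, and then runs a delicate estimate combining the exterior ball centered at $s_x+r(s_x)\xi_{s_x}$ with the projection ball $B(x;\rho_x)$, in which the thresholds $\g\geq\frac{1}{2\sqrt{3}-2}$ and $\g\geq\frac{1}{\sqrt{3}}$ are actually consumed. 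Your proposal contains no idea for this case, so as written it only covers the situation in which every unit proximal normal at $s_x$ is realized by an $r(s_x)$-sphere.

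The remaining pieces are essentially sound and match the paper: your far regime $d_S(x)>2\rho(x)$, where $\vrho_\g(x)=\g\rho_x$ and $\bar{B}(x;\g\rho_x)\subset B(x;\rho_x)\subset S^c$, is the paper's Case 1, and your inscribed-ball computation corresponds to the paper's Case 2.1.2 (note, though, that the threshold on $\g$ arises there from the inequality $\rho_x+2\vrho_\g(x)<4\rho(x)$, not from matching the two regimes at $d_S(x)=2\rho(x)$). Finally, since the statement to be proved concerns $\rho(\cdot)$ rather than $\vrho_\g(\cdot)$, you should close with the one observation that constitutes the paper's entire derivation of the corollary from Theorem \ref{mainth}: $\vrho_\g(x)\geq\rho(x)$ for every $x\in S^c$, so any closed ball of radius $\vrho_\g(x)$ in $S^c$ containing $x$ contains one of radius $\rho(x)$ still containing $x$, and $\rho(\cdot)$ is lower semicontinuous by \cite[Proof of Theorem 3.1]{NT2025}.
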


\begin{example} \label{remexam} In this example, we prove that the constant $\g$ in Theorem  \ref{mainth} {\it cannot} be replaced by $1$. We consider in $\R^2$ the three points $$c_1:=\left(-\tfrac{2}{\sqrt{3}},0\right),\;c_2:=\left(\tfrac{1}{\sqrt{3}},-1\right),\;\hbox{and}\;c_3:= \left(\tfrac{1}{\sqrt{3}},1\right).$$
We define $S:=(B(c_1;1))^c\cap(B(c_1;1))^c\cap(B(c_3;1))^c$, see Fig. \ref{Fig1}. 
One can easily see that $S$ satisfies the extended exterior $1$-sphere condition. Furthermore:
\begin{itemize}
\item The {\it largest} radius of an {\it open} ball in $S^c=B(c_1;1)\cup B(c_1;1)\cup B(c_3;1)$ containing $c_1$ is $\rho_{c_1}=1$. 
\item For the origin $(0,0)$, we have $\rho_{(0,0)}=\frac{1}{\sqrt{3}}$ and $\rho(0,0)=\frac{1}{2}$. Moreover, the {\it largest} radius of an {\it open} ball in $S^c$ containing $(0,0)$ is $$\frac{1}{\sqrt{3}}=\frac{1}{2}\sqrt{\rho_{(0,0)}^2+4\rho(0,0)^2}.$$
\end{itemize}
\end{example}
\begin{figure}[tb]
\centering
\includegraphics[scale=1.41]{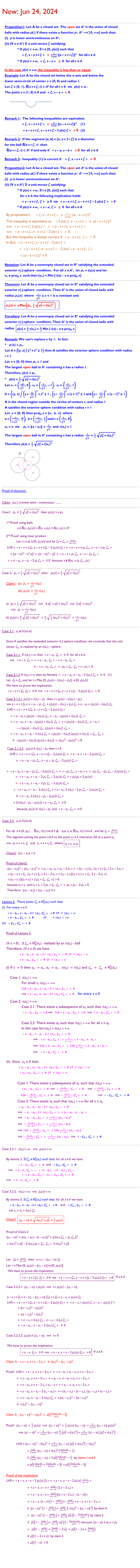}
\caption{\label{Fig1} The set $S$ of Example \ref{remexam}}
\end{figure}
\section{Proof of the Main Result} \label{sec3} The goal of this section is to provide an analytical proof for Theorem \ref{mainth}. Let $S\subset\R^n$ be a nonempty and closed set satisfying the extended exterior $r(\cdot)$-sphere condition for some continuous function $r(\cdot)\colon\bdry S\f]0,+\infty]$. We fix $\frac{1}{2\sqrt{3}-2}\leq \g<1$.\\
{\it Claim} 1: The function $\vrho_\gamma(\cdot)$ is lower semicontinuous.\vspace{0.1cm}\\
As the maximum of two lower semicontinuous functions is known to be  lower semicontinuous, and the function $x\mapsto \rho_x$ is continuous,  it is sufficient to prove that the function $x\mapsto \sqrt{\g^2\rho_x^2+4\rho(x)^2}$ is lower semicontinuous. This latter follows directly from the continuity of $x\mapsto \rho_x$, the lower semicontinuity of $\rho(\cdot)$, and since the sum of two lower semicontinuous functions is lower semicontinuous, and the two functions  $x\mapsto x^2$ and $x\mapsto \sqrt{x}$ are both continuous and monotonically nondecreasing on $[0,+\infty]$. This terminates the proof of Claim 1.

We proceed to prove that $S^c$ is the union of closed balls with radius function $\vrho_\g(\cdot)$. Let $x\in S^c$, and denote by  $s_x\in\proj_S(x)$ the point satisfying $\rho(x)=\frac{r(s_x)}{2}$, and by $\zeta_{s_x}:=\frac{x-s_x}{\|x-s_x\|}\in N_S^P(s_x)$. There are two cases to consider.\vspace{0.1cm}\\
\underline{Case 1:} $\g \rho_x\geq \frac{1}{2}\sqrt{\g^2\rho_x^2+4\rho(x)^2}$.\vspace{0.1cm}\\
Then $\vrho_\g(x)=\g\rho_x<+\infty$. Let $t_x:=\g\rho_x\in[0,\vrho_\g(x)]$. Since $\zeta_{s_x}$ is realized by a $\rho_x$-sphere, we have from \eqref{equivalence} that  $$\<s-s_x,s-s_x-2\rho_x\zeta_{s_x}\>\geq 0,\;\;\forall s\in S.$$
This yields, using \eqref{equivrembis} and the inclusion \begin{eqnarray*} [x-t_x\zeta_{s_x},x-t_x\zeta_{s_x}+2\vrho_\g(x)\zeta_{s_x}] &=&[x-\g\rho_x\zeta_{s_x},x+\g\rho_x\zeta_{s_x}]\\&\subset& ]x-\rho_x\zeta_{s_x},x+\rho_x\zeta_{s_x}[\\&=&]s_x,s_x+2\rho_x\zeta_{s_x}[,\end{eqnarray*} 
that $$\<s-x+t_x\zeta_{s_x},s-x+(t_x-2\vrho_\g(x))\zeta_{s_x}\>> 0,\;\;\forall s\in S.$$
Hence, for $\zeta_x:=\zeta_{s_x}$, we have $$\<s-x+t_x\zeta_x,s-x+(t_x-2\vrho_\g(x))\zeta_x\>> 0,\;\;\forall s\in S.$$
\underline{Case 2:} $\g \rho_x< \frac{1}{2}\sqrt{\g^2\rho_x^2+4\rho(x)^2}$.\vspace{0.1cm}\\
Then $\vrho_\g(x)=  \frac{1}{2}\sqrt{\g^2\rho_x^2+4\rho(x)^2}$. Moreover, it follows that  \begin{equation}\label{inequalities}   \begin{cases} \displaystyle  \rho_x<\frac{2}{\g \sqrt{3}}\rho(x)=\frac{1}{\g \sqrt{3}}r(s_x),\vspace{0.15cm}\\ \displaystyle  \vrho_\g(x)<\frac{2}{\sqrt{3}}\rho(x)=\frac{1}{\sqrt{3}}r(s_x),\,\hbox{and}\vspace{0.15cm}\\ \displaystyle  \rho_x+2\vrho_\g(x)<\left(\frac{1}{\g}+2\right)\frac{2}{\sqrt{3}}\rho(x)\leq4\rho(x),\end{cases} \end{equation}
where the last inequality follows since $\g\geq \frac{1}{2\sqrt{3}-2}$.\vspace{0.1cm}\\
\underline{Case 2.1:} $s_x\not\in\bdry(\inte S)$.\vspace{0.1cm}\\
Then the vector $\zeta_{s_x}\in N_S^P(s_x)$ is realized by an $r(s_x)$-sphere.\vspace{0.1cm}\\
\underline{Case 2.1.1:} $r(s_x)=2\rho(x)=+\infty$.\vspace{0.1cm}\\
Then $\vrho_\g(x)=+\infty$, and $ \<\zeta_{s_x},s-s_x\>\leq0$ for all $s\in S$. Hence, for $\zeta_x:=\zeta_{s_x}$, we have $$\<\zeta_x,s-x\>=\<\zeta_x,s-s_x\>-\<\zeta_x,x-s_x\>\leq -\<\zeta_x,x-s_x\>=-\rho_x<0,\;\;\forall s\in S.$$
\underline{Case 2.1.2:} $r(s_x)=2\rho(x)<+\infty$.\vspace{0.1cm}\\
Then $\vrho_\g(x)<+\infty$,  and $\left\<s-s_x,s-s_x-2r(s_x)\zeta_{s_x}\right\> \geq 0$ for all $s\in S$. Hence, for $\zeta_x:=\zeta_{s_x}$, we have 
$$\left\<s-s_x,s-s_x-4\rho(x)\zeta_{x}\right\> \geq 0,\;\;\forall s\in S.$$
This yields, using \eqref{equivrembis} and the inclusion $$[x,x+2\vrho_\g(x)\zeta_x]=[s_x+\rho_x\zeta_x,s_x+(\rho_x+2\vrho_\g(x))\zeta_x]\overset{\eqref{inequalities}}{\subset} ]s_x,s_x+4\rho(x)\zeta_x[,$$
that, for $t_x:=0\in [0,\varrho_\g(x)]$, $$\<s-x+t_x\zeta_x,s-x+(t_x-2\vrho_\g(x))\zeta_x\>=\<s-x,s-x-2\vrho_\g(x)\zeta_x\>> 0,\;\;\forall s\in S.$$
\underline{Case 2.2:} $s_x\in\bdry(\inte S)$.\vspace{0.1cm}\\
Let $N_x\in\N$ such that $0<\frac{1}{N_x}<\rho_x$. Having $s_x\in\bdry(\inte S)$, we deduce that $\bar{B}(s_x;\frac{1}{n})\cap \inte S\not=\emptyset$ for all $n\geq N_x$. For each $n\geq N_x$, we denote by $z_n\in \bar{B}(s_x;\frac{1}{n})\cap \inte S$ and by $\zeta_n:=\frac{z_n-x}{\|z_n-x\|}$. We also consider $s_n\in [x,z_n]\cap \bdry S$, where the latter intersection is nonempty since $x\in S^c$ and $z_n\in \inte S$. Clearly we have, for each $n\geq N_x$, $$s_n=x+t_1^n\zeta_n\;\;\hbox{and}\;\;z_n=x+t_2^n\zeta_n,\;\,\hbox{for some}\;\;t_2^n>t_1^n\geq\rho_x.$$
A simple calculation yields that for all $n\geq N_x$, $$\|z_n-s_x\|^2-\|s_n-s_x\|^2=(t_2^n-t_1^n)((t_2^n+t_1^n)+2\rho_x\<\xi_n,\zeta_x\>)>0,$$
where the last inequality follows since $t_2^n>t_1^n$, and $$t_2^n+t_1^n+2\rho_x\<\xi_n,\zeta_x\>>\rho_x+\rho_x-2\rho_x=0.$$
Hence, $\|s_n-s_x\|<\|z_n-s_x\|\leq\frac{1}{n}$ for all $n\geq N_x$. This gives that \begin{equation}\label{limits} \lim_{n\f+\infty} s_n=\lim_{n\f+\infty} z_n=s_x.\end{equation}
On the other hand, as $S$ satisfies the extended exterior $r(\cdot)$-sphere condition, there exists, for each $n\geq N_x$, a unit vector $\xi_n\in N_S^P(s_n)$ such that \begin{equation}\label{claim2(i)} \begin{cases}\left\<s-s_n,s-s_n-2r(s_n)\xi_n\right\> \geq 0\;\,\hbox{for all}\;s\in S,&\hbox{if}\; r(s_n)<+\infty,\\  \left\< \xi_n,s-s_n\right\>\leq 0\;\,\hbox{for all}\;s\in S,&\hbox{if}\; r(s_n)=+\infty.\end{cases}\end{equation}
From \eqref{limits}, the proximal normal inequality, the continuity of $r(\cdot)$, and since $\xi_n$ is unit, we can assume that $\xi_n\f\xi_{\sx}\in N_S^P(s_x)$. \vspace{0.1cm}\\
{\it Claim} 2: The unit vector $\xi_{s_x}$ satisfies the following:\vspace{0.1cm}
\begin{enumerate}[$(i)$]
\item $\begin{cases}\<s-s_x,s-s_x-2r(s_x)\xi_\sx\>\geq 0\;\,\hbox{for all}\,s\in S, &\hbox{if}\;r(s_x)<+\infty,\\\<\xi_\sx,s-s_x\>\leq 0\;\,\hbox{for all}\,s\in S, &\hbox{if}\;r(s_x)=+\infty. \end{cases}$\vspace{0.1cm}
\item $\<\xi_\sx,\zeta_\sx\>\geq 0$.\vspace{0.2cm}
\end{enumerate}
Note that Claim 2$(i)$ follows after taking $n\f\infty$ in \eqref{claim2(i)}. For Claim 2$(ii)$, it is sufficient to replace $s$ by $z_n$ in \eqref{claim2(i)}, then take $n\f\infty$ after noticing that $$\frac{z_n-s_n}{\|z_n-s_n\|}=\frac{z_n-x}{\|z_n-x\|}\f \zeta_{\sx}.$$
\underline{Case 2.2.1:} $r(s_x)=2\rho(x)=+\infty$.\vspace{0.1cm}\\
Then $\vrho_\g(x)=+\infty$, and by Claim 2, there exists a unit vector $\xi_{s_x}\in N_S^P(s_x)$ such that $$\<\xi_\sx,s-s_x\>\leq 0,\;\;\forall s\in S,\;\;\hbox{with}\;\;\<\xi_\sx,\zeta_\sx\>\geq 0.$$
Hence, for $\zeta_x:=\xi_\sx$, we have $$\<\zeta_x,s-x\> = \<\xi_\sx,s-s_x\>+ \<\xi_\sx,s_x-x\> \leq  \<\xi_\sx,s_x-x\>=-\rho_x\<\xi_\sx,\zeta_\sx\>\leq 0,\,\;\forall s\in S.$$
\underline{Case 2.2.2:} $r(s_x)=2\rho(x)<+\infty$.\vspace{0.1cm}\\
Then $\vrho_\g(x)<+\infty$, and by Claim 2, there exists a unit vector $\xi_{s_x}\in N_S^P(s_x)$ such that \begin{equation}\label{xiineq} \<s-s_x,s-s_x-2r(s_x)\xi_\sx\>\geq 0,\;\;\forall s\in S,\;\;\hbox{with}\;\;\<\xi_\sx,\zeta_\sx\>\geq 0.\end{equation}
Let $y_x:=s_x+r(s_x)\xi_\sx$. We define $$\zeta_x:=\begin{cases}\displaystyle  \frac{y_x-x}{\|y_x-x\|}, &\hbox{if}\;y_x\not=x, \vspace{0.15cm}\\ \xi_\sx=\zeta_\sx &\hbox{if}\;y_x=x, \end{cases}\;\;\hbox{and}\;\;t_x:=\max\{0,\vrho_\g(x)-\|y_x-x\|\}\in [0,\vrho_\g(x)].$$
\underline{Case 2.2.2.1:} $\|y_x-x\|\leq \vrho_\g(x)$.\vspace{0.1cm}\\
Then $t_x=(\vrho_\g(x)-\|y_x-x\|)\in [0,\vrho_\g(x)]$. Clearly we have $$x-t_x\zeta_x=y_x-\vrho_\g(x)\zeta_x\;\;\hbox{and}\;\;x+(2\vrho_\g(x)-t_x)\zeta_x=y_x+\vrho_\g(x)\zeta_x.$$
This yields that \begin{eqnarray}\nonumber [x-t_x\zeta_x,x+(2\vrho_\g(x)-t_x)\zeta_x]&=&[y_x-\vrho_\g(x)\zeta_x,y_x+\vrho_\g(x)\zeta_x]\\&\overset{\eqref{inequalities}}{\subset}& ]y_x-r(s_x)\zeta_x,y_x+r(s_x)\zeta_x[.\label{inclusionimp} \end{eqnarray}
From \eqref{xiineq} and the definition of $y_x$, we deduce that $$\<s-y_x+r(s_x)\xi_\sx,s-y_x-r(s_x)\xi_\sx\>\geq 0,\;\;\forall s\in S.$$
This gives using \eqref{inclusionimp} and \eqref{equivrembis}, that $$\<s-x+t_x\zeta_x, s-x+(t_x-2\vrho_\g(x))\zeta_x\>>0,\;\;\forall s\in S.$$
\underline{Case 2.2.2.2:} $\|y_x-x\|> \vrho_\g(x)$.\vspace{0.1cm}\\
Then $t_x=0\in [0,\vrho_\g(x)]$, and $\zeta_x=\frac{y_x-x}{\|y_x-x\|}$. So, we need to prove that $$\<s-x, s-x-2\vrho_\g(x)\zeta_x\>>0,\;\;\forall s\in S. $$
Let $s\in S$. Using the definition of $\zeta_x$, one can easily prove that \begin{eqnarray}\nonumber  \<s-x, s-x-2\vrho_\g(x)\zeta_x\>&=&\left(1-\frac{\vrho_\g(x)}{\|y_x-x\|}\right)\|s-x\|^2 \\ &+& \frac{\vrho_\g(x)}{\|y_x-x\|} \<s-x,s-x-2y_x\>.\label{last1}\end{eqnarray}
On the other hand, we have  \begin{eqnarray*} \<s-x,s-x-2y_x\> &=& \<s-s_x,s-s_x-2(y_x-s_x)\>\\&+&\<(s_x-y_x)-(x-y_x),(s_x-y_x)+(x-y_x)\>\\&=&\<s-s_x,s-s_x-2r(s_x)\xi_\sx\> \\&+& \|s_x-y_x\|^2-\|x-y_x\|^2 \\&\overset{\eqref{xiineq}}{\geq}& \|s_x-y_x\|^2-\|x-y_x\|^2 = r(s_x)^2-\|x-y_x\|^2.\end{eqnarray*}
Combining this latter with \eqref{last1}, we obtain that \begin{eqnarray}\nonumber \<s-x, s-x-2\vrho_\g(x)\zeta_x\>&\geq&  \left(1-\frac{\vrho_\g(x)}{\|y_x-x\|}\right)\|s-x\|^2\\&+& \frac{\vrho_\g(x)}{\|y_x-x\|}\left(r(s_x)^2-\|x-y_x\|^2\right).\label{last2}\end{eqnarray}
\underline{Case 2.2.2.2.1:} $\|y_x-x\|> 2\vrho_\g(x)$.\vspace{0.1cm}\\
Then $\left(1-\frac{\vrho_\g(x)}{\|y_x-x\|}\right)>0$. Moreover, we have  $\|s-x\|\geq d_S(x)=\rho_x$, and \begin{eqnarray*}r(s_x)^2-\|x-y_x\|^2&=& r(s_x)^2- \|(x-s_x)+(s_x-y_x)\|^2 \\ &=&r(s_x)^2-\|\rho_x\zeta_\sx-r(s_x)\xi_\sx\|^2\\&=& -\rho_x^2 +2r(s_x)\rho_x\<\xi_\sx,\zeta_\sx\>\overset{\eqref{xiineq}}{\geq} -\rho_x^2.\end{eqnarray*}
Hence, using \eqref{last2}, we conclude  that $$  \<s-x, s-x-2\vrho_\g(x)\zeta_x\> \geq \rho_x^2\left(1-\frac{2\vrho_\g(x)}{\|y_x-x\|}\right)> 0.$$
\underline{Case 2.2.2.2.2:} $\|y_x-x\|\leq 2\vrho_\g(x)$.\vspace{0.1cm}\\
Then \begin{eqnarray*}r(s_x)^2-\|x-y_x\|^2&\geq&  r(s_x)^2 - 2\vrho_\g(x)\|x-y_x\| \\&=& r(s_x)^2 - \frac{2}{\vrho_\g(x)}\|x-y_x\|\vrho_\g(x)^2\\&=&  r(s_x)^2 - \frac{2}{\vrho_\g(x)}\|x-y_x\| \frac{1}{4} \left(\g^2\rho_x^2+r(s_x)^2\right)\\&=& r(s_x)^2\left(1-\frac{\|x-y_x\|}{2\vrho_\g(x)}\right)-\g^2\rho_x^2\frac{\|x-y_x\|}{2\vrho_\g(x)}\\ &\overset{\eqref{inequalities}}{\geq} & 3\g^2\rho_x^2\left(1-\frac{\|x-y_x\|}{2\vrho_\g(x)}\right)-\g^2\rho_x^2\frac{\|x-y_x\|}{2\vrho_\g(x)}\\&=&\g^2\rho_x^2\left(3-\frac{2\|x-y_x\|}{\vrho_\g(x)}\right). \end{eqnarray*}
Add to this that $\|s-x\|\geq d_S(x)=\rho_x$, we deduce, using \eqref{last2}, that \begin{eqnarray*} \<s-x, s-x-2\vrho_\g(x)\zeta_x\> &\geq& \rho_x^2\left(1-2\g^2+(3\g^2-1)\frac{\vrho_\g(x)}{\|x-y_x\|}\right) \\ &\geq&  \rho_x^2 \left(\frac{1-\g^2}{2}\right)\;\;\;\;\left[\hbox{since}\,\;\g\geq \frac{1}{\sqrt{3}}\;\,\hbox{and}\,\;\frac{\vrho_\g(x)}{\|x-y_x\|}\geq \frac{1}{2}\right] \\&>&0. \end{eqnarray*}
The proof of Theorem \ref{mainth} is terminated.  \hfill $\qedsymbol$


\begin{thebibliography}{9}

\bibitem{canino} Canino, A.: On $p$-convex sets and geodesics, J. Diff. Equations 75/1, 118--157 (1988) 

\bibitem{clsw} F. H. Clarke, F. H.,  Ledyaev, Yu., Stern, R. J.,  Wolenski, P. R.:  Nonsmooth Analysis and Control Theory, Graduate Texts in Mathematics 178, Springer, New York (1998)

\bibitem{csw} Clarke, F. H.,  Stern, R. J. , Wolenski, P. R.: Proximal smoothness and the lower-$C^2$
property, J. Convex Analysis 2/1-2, 117--144 (1995)

\bibitem{cm} Colombo, G.,  Marigonda, A.: Differentiability properties for a class of non-convex
functions, Calc. Var. 25, 1--31 (2005)

\bibitem{fed} Federer, H.: Curvature measures, Trans. Amer. Math. Soc. 93,  418--491 (1959)

\bibitem{mord} Mordukhovich,  B. S.: Variational Analysis and Generalized Differentiation. I: Basic Theory, Springer, Berlin (2006)

\bibitem{Nacry} Nacry, F., Thibault, L.: Distance Function Associated to a Prox-regular set, Set-Valued Var. Anal. 30, 731--750 (2022)

\bibitem{NT2009} Nour, C., Takche, J.: Proximal smoothness and the exterior sphere condition, J. Convex Analysis 16/2, 501--514 (2009) 

\bibitem{NT2024} Nour, C., Takche, J.: The Extended Exterior Sphere Condition, J. Convex Analysis 31/1, 39--50 (2024)

\bibitem{NT2025} Nour, C., Takche, J.: The Variable Radius Form of the Extended Exterior Sphere Condition, J. Convex Anal., to appear (2025)

\bibitem{penot}  Penot, J.-P.: Calculus Without Derivatives, Graduate Texts in Mathematics 266, Springer, New York (2013)

\bibitem{prt} Poliquin, R. A., Rockafellar, R. T., Thibault, L.: Local differentiability of distance functions, Trans. Amer. Math. Soc. 352, 5231--5249 (2000) 

\bibitem{rockwet} Rockafellar, R. T., Wets, R. J.-B.: Variational Analysis, Grundlehren der Mathematischen Wissenschaften 317, Springer, Berlin (1998)

\bibitem{shapiro} Shapiro, A. S.: Existence and differentiability of metric projections in Hilbert spaces, SIAM J. Optim. 4,  231--259 (1994)

\bibitem{thibault} Thibault, L.: Unilateral Variational Analysis in Banach Spaces, World Scientific (2023)

\end{thebibliography}

\end{document}